\newcommand{\N}{\mathbb{N}}
\newcommand{\R}{\mathbb{R}}
\newcommand{\rank}{\mathrm{rank}\,}
\newcommand{{\km}}{\rm k}
\newcommand{\dsum}{\displaystyle\sum}
\def\B{\mathcal{B}}
\def\Mo{\mathrm{M}}
\def\L{\mathrm{L}}
\newtheorem{theo}{Theorem}
\newtheorem{defi}[theo]{Definition}
\newtheorem{ex}[theo]{Example}
\newtheorem{lem}[theo]{Lemma}
\newtheorem{cor}[theo]{Corollary}
\title{A semidefinite programming approach for solving\\Multiobjective Linear Programming}
\author{V\'ictor Blanco}
\address{Dep. of Quantitative  Methods for Economics \& Business, Universidad de Granada.}
\email{vblanco@ugr.es}
\author{Justo Puerto \and Safae Ben-Ali}
\address{IMUS, Universidad de Sevilla.}
\email{puerto@us.es,anasafae@gmail.com}
\keywords{Multiobjective Linear Programming \and Semidefinite Programming \and Polynomial Optimization \and Moment Problem.}
\begin{document}
\maketitle

\begin{abstract}
Several algorithms are available in the literature for finding the entire set of Pareto-optimal solutions in MultiObjective Linear Programming (MOLP). However, it has not been proposed so far an interior point algorithm that finds all Pareto-optimal solutions of MOLP. We present an explicit construction, based on a transformation of any MOLP into a finite sequence of SemiDefinite Programs (SDP), the solutions of which give the entire set of Pareto-optimal extreme points solutions of MOLP. These SDP problems are solved by interior point methods; thus our approach provides a pseudo-polynomial interior point methodology to find the set of Pareto-optimal solutions of MOLP.
\end{abstract}

\section{Introduction}
Although already more than 60 years old, Linear Programming is still nowadays one of the most important areas of research and application in Mathematical Programming/Operations Research. There are many different algorithms for its resolution but essentially all of them can be classified in four main branches: primal simplex, dual simplex, primal-dual simplex and interior point methods.

Although not as the same level as standard linear programming, it is also commonly accepted that Multiobjective Linear programming (MOLP) is another very important area of activity within the optimization field. Multiobjective optimization is motivated by the need to consider multiple, conflicting, objectives in real world decision making problems. Multiple objective linear programming has been a subject of research since the 1960s for its
relevance in practice, as a mathematical topic in its own right. Its development has come in parallel to the scalar counterpart and its theory and algorithms have been developed mainly in the last two decades with   some remarkable exceptions. Solving a MOLP means to obtain the entire set of Pareto-optimal solutions. By analogy with the scalar case, one can find in the specialized literature several algorithms to find the entire Pareto-optimal set. In the  multiobjective case there are primal simplex-like type algorithms (Steuer \cite{Steuer1985}, Yu and Zeleny \cite{YZ1975} and the references therein), primal-dual simplex-like algorithms (Ehrgott, Puerto and Rodriguez-Chia \cite{EPR2007}) and dual simplex-like algorithms (Benson \cite{Benson1998}, Ehrgott, Lohne and Shao \cite{ELS2011}). Moreover, there are some partial approaches that use interior point methods to  approximate or to find some Pareto-optimal points (see e.g. Fliege \cite{Fliege2004,Fliege2006}) but no interior point algorithm that finds all Pareto-optimal solutions has been proposed so far. Quoting \cite{ELS2011}: \textit{``Interior point algorithms either require interaction with a decision maker to find a preferred efficient solution or find at most an efficient face. No interior point algorithm that finds all efficient solutions has been proposed''.}

Due to the natural parallelism between these two areas, namely scalar and multiobjective linear programming, different authors wondered whether there would exist a  fully based interior point approach to generate the complete Pareto-optimal set of a MOLP. This paper  gives an affirmative answer to this question.

The goal of this paper is to develop an interior point method that generates the entire Pareto-optimal set of MOLP. The interest of this research is two-fold: 1) It strengthens the parallelism between these two areas of Mathematical Programming, namely Linear Programming and Multiobjective Linear Programming; 2) It is theoretically appealing because shows how to adapt some tools available nowadays in the field of polynomial optimization (\cite{lasserrebook,lasserre22}) to be applied in a completely different field as is Multiobjective Linear Programming.

From the results in this paper, we show that it is theoretically possible to find the entire set of Pareto-optimal solutions using  interior point algorithms. Indeed, we will explicitly describe an algorithm to perform that task. Our approach  consists of constructing  a finite sequence (its cardinality $m$ is the number of constraints in the MOLP) Semidefinite programs based on the original MOLP, the solution of which can be used to generate the entire Pareto-optimal set using the moment matrix algorithm by Lasserre \cite{lasserrebook}. Nevertheless, this program size is not polynomial in the input size of our original MOLP. There is no surprise in this fact because it is well-known that finding the entire Pareto-optimal set is NP-hard since it may be equivalent to enumerate the vertices of its feasible region Khachygan et al \cite{BBEG08}. Thus if our construction were polynomial in the input size, interior point algorithms would prove  polynomiality of the problem. Needless to say that even though our construction is explicit, we do not claim that this approach is computationally competitive with other approaches available nowadays, as for instance the variant of the outer algorithm of Benson developed recently in \cite{ELS2011}. Nevertheless, it is of theoretical interest because proposes the first interior point based algorithm for obtaining the entire Pareto-optimal set, it suggests a completely different scheme to address this problem and proposes another  pseudo-polynomial algorithm to deal with that computation.  It is worth mentioning a different framework where a similar approach is applicable: multiobjective polynomial integer programming.  We point out that the use of algebraic tools for solving multiobjective is not new. The interested reader is  referred to \cite{BP2009,BP2011} for further details.

The rest of the paper is organized as follows. Section \ref{s:molp} describes the general multiobjective optimization problem, together with the considered concept of solution and it recalls the main results that are needed for the rest of the paper. Section \ref{s:sdpmo} presents the theoretical results that lead to obtain the entire set of Pareto-optimal solutions of MOLP using interior point algorithms. Here, we explicitly describe a system of polynomial equations that encodes the entire set of Pareto-optimal extreme points which is the basis of our next results. Section \ref{s:cuatro} reduces the problem of finding the entire set of Pareto-optimal extreme points to solving $m$ explicit SDP problems. They are  feasibility problems since their objective functions are constant. In addition, we also show how to extract all its, finitely many, feasible solutions by applying the moment matrix algorithm \cite{lasserrebook}. This construction is illustrated with an example taken from the literature \cite{ELS2011}. In the final section of the paper we draw some  conclusions.

\section{Multiobjective linear programming\label{s:molp}}

In this section we recall the main theoretical results for the development in this paper. We begin by describing the general framework to cast the problem to be handled. A \emph{multiobjective optimization problem (MOP)} consists of:
\begin{align}
& v-\min \;\;(f_1(x), \ldots, f_k(x))\tag{${\rm MOP}$}\label{mop}\\
&s.t. \;\; x \in S\nonumber
\end{align}
where $f_i: \R^n \rightarrow \R$ for $i=1,\dots,k$ are the \emph{objective functions} and $S \subseteq \R^n$ is the \emph{feasible region}. The symbol $v-\min$ means that we want to minimize all the objective functions simultaneously. If there is no conflict between the objective functions, then, a solution can be found where every objective function attains its optimum. In such a case, no special methods are needed. Otherwise, first we need to state what we understand by a solution of the above problem. It is commonly agreed that the solution concept is related with the notion of Pareto-optimal points. The definition of Pareto-optimal solution is due to Edgeworth \cite{edgeworth}. However, that name was first used by Koopmans in \cite{koopmans} after the developments by Pareto in \cite{pareto64,pareto71}.

\begin{defi}
\label{def:1}
A decision vector $x^* \in S$ is a \emph{Pareto-optimal solution} for \eqref{mop} if it does not exist another decision vector $x\in S$ such that $f_i(x) \leq f_i(x^*)$ for all $i=1, \ldots, k$ and $f_j(x) < f_j(x^*)$ for at least one index $j$.

The set of Pareto-optimal solutions is called the \emph{Pareto-optimal set}. (PO set, for short.)

If $x^*$ is a Pareto-optimal solution $f(x^*)$ is said an \emph{efficient solution} of \eqref{mop}.
\end{defi}
It is commonly agreed that solving a (MOP) consists of finding the entire set of Pareto-optimal solutions. There are some other solution concepts for multiobjective optimization problems as local, weak, proper or strong Pareto optimality (see \cite{miettinen}). In this paper we will restrict ourselves to the standard definition of Pareto-optimal solutions, although similar approaches, to the one adopted in this paper,  would be also valid for the rest of the solution concepts in multiobjective optimization.

One of the main methods for describing the Pareto-optimal set of a multiobjective optimization problem is by \emph{scalarization}, that is, transforming the multiobjective problem into a single or a family of single-objective problems with a real-valued objective function, depending on some parameters. This enables the use of the theory and methods of scalar optimization to be applicable to get the solutions of MOP. The importance of these methods rests on the fact that Pareto-optimal solutions of MOP can be  characterized, in most cases, as solutions of certain single objective optimization problems. There are several of these scalarization methods for solving MOP (see \cite{miettinen}). Among them, we consider the weighting method. The idea of this method is to associate each objective function with a weighting coefficient so as to minimize the weighted sum of the objective functions. The weighting method can be used so that the decision maker specifies a weighting vector representing his preference information. However, this method can also be used to generate iteratively several solutions of \eqref{mop} by modifying adequately  the weighting coefficients.
The success of this approach is based on the following result by Gass and Saaty \cite{gass-saaty} or Zadeh \cite{zadeh63}.

\begin{lem}
\label{lem:2}
Let $f_i$ be convex for all $i=1, \ldots, k$ and $S$ be a convex set. Then, if $x^* \in S$ is a Pareto-optimal solution of \eqref{mop}, there exists a weighting vector $\lambda \in \R^k_+\setminus \{0\}$, $\sum_{i=1}^k\lambda_i=1$ such that $x^*$ is a solution of the following scalar problem:
\begin{align}
 \min &\; \dsum_{i=1}^k \lambda_if_i(x) \tag{${\rm SP}$}\label{sp}\\
s.t.&\; x \in S\nonumber
\end{align}
\end{lem}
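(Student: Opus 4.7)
The plan is to invoke a standard convex separation argument on the enlarged image set
\[
A := f(S) + \R^k_+ = \{y \in \R^k : \exists\, x \in S \text{ with } f_i(x) \leq y_i \text{ for all } i\}.
\]
First I would check that $A$ is convex: given $y,y' \in A$ with witnesses $x,x' \in S$ and any $t \in [0,1]$, the point $x'' := tx + (1-t)x'$ lies in $S$ by convexity of $S$, and $f_i(x'') \leq t f_i(x) + (1-t) f_i(x') \leq t y_i + (1-t) y_i'$ for every $i$ by convexity of each $f_i$; hence $t y + (1-t) y' \in A$.

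The second step is to encode the Pareto-optimality of $x^*$ geometrically. Consider the open convex set
\[
C := \{y \in \R^k : y_i < f_i(x^*) \text{ for all } i\}.
\]
I claim $C \cap A = \emptyset$: any $y \in C \cap A$ would provide an $x \in S$ with $f_i(x) \leq y_i < f_i(x^*)$ for every $i$, contradicting Definition \ref{def:1}. Observe also that $f(x^*) \in A$.

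Next I would apply the Hahn--Banach separating hyperplane theorem to the disjoint convex sets $A$ and $C$, with $C$ open. This yields a nonzero $\lambda \in \R^k$ and a scalar $c$ such that $\lambda^T y \leq c \leq \lambda^T z$ for every $y \in C$ and $z \in A$. Nonnegativity of $\lambda$ is automatic: if some $\lambda_i$ were negative, letting $y_i \to -\infty$ inside $C$ would blow up $\lambda^T y$, violating the upper bound. Approaching $f(x^*)$ componentwise from below inside $C$ and combining with $f(x^*) \in A$ pins down $c = \lambda^T f(x^*)$, so that $\lambda^T f(x) \geq \lambda^T f(x^*)$ for every $x \in S$. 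Since $\lambda$ is nonzero and nonnegative, rescaling gives $\sum_{i=1}^k \lambda_i = 1$, and we conclude that $x^*$ solves \eqref{sp} for this weighting vector.

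The only real subtlety lies in the separation step: one must separate $A$ from the open translated negative orthant $C$ rather than from the single point $f(x^*)$, both to ensure a nontrivial hyperplane and to read off nonnegativity of $\lambda$ directly from the shape of $C$. Everything else is routine manipulation of convex inequalities, and no machinery beyond classical convex analysis is required.
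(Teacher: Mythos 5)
Your proof is correct. Note that the paper itself does not prove Lemma \ref{lem:2}; it simply cites the classical references of Gass--Saaty and Zadeh, so there is no in-paper argument to compare against. Your separation argument is the standard textbook proof of this scalarization result: the convexity of $A=f(S)+\R^k_+$, the disjointness of $A$ from the open translated negative orthant $C$, the Hahn--Banach separation, and the extraction of nonnegativity of $\lambda$ and of the value $c=\lambda^T f(x^*)$ are all handled correctly, and the normalization $\sum_i\lambda_i=1$ is legitimate because $\lambda\ge 0$, $\lambda\neq 0$. Two small observations: your argument in fact only uses that no feasible $x$ satisfies $f_i(x)<f_i(x^*)$ for \emph{all} $i$, so it proves the stronger statement that every weakly Pareto-optimal point of a convex problem is a weighted-sum minimizer, which implies the lemma since Pareto optimality (Definition \ref{def:1}) implies weak Pareto optimality; and, as expected, the separation argument cannot yield strictly positive weights, but the lemma only claims $\lambda\in\R^k_+\setminus\{0\}$, so this is not a gap.
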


Note that from the above result, in particular if $f_i$ is linear for all $i=1, \ldots, k$ and $S$ is a convex polyhedron, all the Pareto optimal solutions of \eqref{mop} can be found by the weighting method.

In this paper we are interested in solving a special class of multiobjective optimization problems where all the objective functions are linear and the feasible region is described by a set of linear inequalities, that is:
\begin{align}
 v-\min &\; Cx:=(c^1x, \ldots, c^kx) \label{molp}\tag{${\rm MOLP}$}\\
s.t. &\;  Ax \geq b  \nonumber \\
& x \geq 0\nonumber
\end{align}
with $c^i$ the i-the row of $C\in \mathbb{R}^{k\times n}$, $i=1,\ldots,k$, $A \in \R^{m \times n}$ and $b\in \R^m$.  We assume w.l.o.g. that the system $Ax\ge b$, $x\ge 0$, has not redundant inequalities and that $0\not \in cone (c^1,\ldots,c^k)$ since otherwise all the feasible region is Pareto-optimal and the problem is trivial. Under our assumption, the Pareto-optimal set is included in the boundary of the feasible region and it is well-known that it is edge connected but not necessarily  convex, see e.g. \cite{Steuer1985}.   With this settings we say that a Pareto-optimal solution for \eqref{molp} is an \textit{extreme point} if it is a vertex of the polyhedron defining the feasible region of \eqref{molp}.

In addition and in order to simplify our presentation, we will assume w.l.o.g. that the feasible region is a polytope and that we are given redundant upper bounds on the values of the $x$ variables, namely we are given $ub_j^P$ such that $x_j\le ub_j^P$, $i=1,\ldots,n$. Note that by the fact that the feasible region is a polytope these bounds can always be obtained for sufficiently large $ub^P$ values and  they are redundant.

Lemma  \ref{lem:2} ensures that  to solve \eqref{molp} it suffices to apply the above weighting method. In doing that, this problem is transformed to a family of parametric linear programming problems. For this reason, we recall here some results about linear programming that will be useful in the next sections.

Consider the following pair of dual linear programming problems:

 \begin{minipage}[c]{6cm}{\begin{align}
 \min &\; c^tx \nonumber \\
 s.t. & \; Ax \geq b\label{lp}\tag{${\rm LP}$}\\
& x \geq 0\nonumber
\end{align}} \end{minipage}
\begin{minipage}[c]{6cm}{\begin{align}
 \max &\; b^tu\nonumber\\
s.t.& \; A^t u \leq c\label{dlp}\tag{${\rm DLP}$}\\
& u \geq 0\nonumber
\end{align}}
\end{minipage}\\
with $A \in \R^{m \times n}$, $b\in \R^m$ and $c \in \R^n$.

By our assumption on the feasible region of the primal problem (\ref{lp}), we observe that  the dual problem can be also assumed to be bounded and that we can assume that we are also given redundant upper bounds on the feasible values of (DLP). Namely, we know $ ub_i^D$ such that $u_i\le ub_i^D$ for all $i=1,\ldots,m$.

The following classical result, whose proof can be found in \cite{vanderbei}, gives the relationship between the optimal solutions of the above problems:
\begin{lem}[Strong Duality Theorem]
\label{lem:3}
Let $x^*$ be a feasible solution of \eqref{lp} and let $u^*$ be a feasible solution of \eqref{dlp} such that $cx^* = b^tu^*$, then $x^*$ is an optimal solution for \eqref{lp} and $u^*$ is an optimal solution for \eqref{dlp}.
\end{lem}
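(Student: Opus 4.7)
The statement is essentially the sufficiency direction of LP duality: coincidence of primal and dual objective values certifies mutual optimality. The plan is therefore to derive it as an immediate corollary of the Weak Duality inequality, which in turn is just a chain of two sign-based inequalities obtained from the feasibility constraints.

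First I would establish Weak Duality: for every $x$ feasible in \eqref{lp} and every $u$ feasible in \eqref{dlp}, one has $b^t u \le c^t x$. The argument is a two-line sandwich. Since $x \ge 0$ and $c - A^t u \ge 0$, the inner product $(c - A^t u)^t x$ is nonnegative, giving $(A^t u)^t x \le c^t x$. Symmetrically, since $u \ge 0$ and $Ax - b \ge 0$, one has $u^t(Ax - b) \ge 0$, i.e.\ $b^t u \le u^t A x$. Using the identity $u^t A x = (A^t u)^t x$ to connect the two ends yields
\begin{equation*}
 b^t u \;\le\; u^t A x \;=\; (A^t u)^t x \;\le\; c^t x.
\end{equation*}

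Once Weak Duality is in hand, the lemma follows directly. Given the hypothesised pair $(x^*,u^*)$ with $c\,x^* = b^t u^*$, take any feasible $x$ for \eqref{lp}. Applying Weak Duality to $(x,u^*)$ gives $c\,x \ge b^t u^* = c\,x^*$, so no feasible $x$ beats $x^*$ and thus $x^*$ is optimal for \eqref{lp}. Symmetrically, for any feasible $u$ for \eqref{dlp}, Weak Duality applied to $(x^*,u)$ yields $b^t u \le c\,x^* = b^t u^*$, showing $u^*$ is optimal for \eqref{dlp}.

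Since the paper explicitly cites \cite{vanderbei} for the proof, I would likely present it in just this compact form (or simply refer the reader to the citation). There is no real obstacle here: the only subtlety worth flagging is the use of nonnegativity of both variables and slacks, which is exactly what makes the weak-duality sandwich work and which is guaranteed by the standing assumption that primal and dual feasible regions are nonempty and bounded (as stated in the paragraph preceding the lemma).
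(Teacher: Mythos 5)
Your proof is correct and is exactly the standard weak-duality argument that the paper itself delegates to its citation of \cite{vanderbei}, so there is nothing different in approach to compare. One small remark: the nonnegativity of the variables and slacks used in the sandwich comes directly from the feasibility constraints $x\ge 0$, $Ax\ge b$, $u\ge 0$, $A^t u\le c$, not from the boundedness assumptions mentioned before the lemma, but this does not affect the validity of your argument.
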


An important consequence of the \textit{Strong Duality Theorem} is the following result, usually called  \emph{Complementary Slackness Theorem}. It  is useful in order to detect whether a feasible solution of \eqref{lp} and a feasible solution of \eqref{dlp} are optimal of their respective problems. Its proof, that can be found in \cite{schrijver}, is easily deduced from Lemma \ref{lem:3}.
\begin{lem}[Complementary Slackness Property]
\label{lem:4}
Let $x^*$ be a feasible solution of \eqref{lp} and let $u^*$ be a feasible solution of \eqref{dlp}. Then, the following statements are equivalent:
\begin{enumerate}
\item $x^*$ is an optimal solution of \eqref{lp} and  $u^*$ is an optimal solution of \eqref{dlp}.
\item $x^*$ and $u^*$ satisfy $u^{*t} (b - Ax)=0$ and $ (u^tA - c^t)x^*=0$.
\end{enumerate}
\end{lem}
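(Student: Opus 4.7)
The plan is to reduce both implications to a single algebraic identity together with a nonnegativity argument. Specifically, for any feasible pair $(x^*, u^*)$ with $Ax^*\ge b$, $x^*\ge 0$, $A^tu^*\le c$, $u^*\ge 0$, I would write
\begin{equation*}
 c^tx^* - b^tu^* \;=\; (c^t - u^{*t}A)\,x^* \;+\; u^{*t}(Ax^* - b),
\end{equation*}
which is obtained by simply adding and subtracting $u^{*t}Ax^*$. Feasibility makes each of the two summands on the right a sum of products of nonnegative numbers, hence nonnegative.

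With the identity in hand, the equivalence essentially writes itself. For the implication (2)$\Rightarrow$(1), the complementary slackness conditions $u^{*t}(b-Ax^*)=0$ and $(u^{*t}A-c^t)x^*=0$ make the right-hand side vanish, so $c^tx^* = b^tu^*$; Lemma \ref{lem:3} then certifies optimality of both $x^*$ and $u^*$. For (1)$\Rightarrow$(2), I would use strong duality in its equality form: since by the paper's standing assumptions both (LP) and (DLP) are feasible and bounded, their common optimal value is attained, and therefore $c^tx^* = b^tu^*$ for any primal-dual optimal pair. The left-hand side of the identity is then zero, and since it equals a sum of two nonnegative quantities, each must vanish, which is exactly condition (2).

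The only real subtlety is that Lemma \ref{lem:3}, as stated, gives only the sufficient direction of strong duality (equal objective values imply optimality). For the converse direction I need the full strong duality equality $\min$(LP) $= \max$(DLP), but under the standing feasibility and boundedness hypotheses adopted here, this is the classical Strong Duality Theorem from \cite{vanderbei} and can be invoked without further ado. Past this observation, the proof is purely a matter of expanding the identity and noting a sum of nonnegatives equals zero iff each summand does; there is no genuine obstacle.
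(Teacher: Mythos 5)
Your proof is correct and follows exactly the route the paper intends: the paper gives no argument of its own, merely citing \cite{schrijver} and remarking that the result is ``easily deduced from Lemma \ref{lem:3}'', and your decomposition $c^tx^*-b^tu^*=(c^t-u^{*t}A)x^*+u^{*t}(Ax^*-b)$ together with nonnegativity of both summands is precisely that standard deduction. Your caveat is also well taken: Lemma \ref{lem:3} as stated only gives the sufficiency direction (equal objective values imply optimality), so for (1)$\Rightarrow$(2) you rightly invoke the full strong-duality equality, which holds under the paper's standing feasibility and boundedness assumptions.
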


In the next section we will show how the entire set of Pareto-optimal solutions of a multiobjective linear problem  \eqref{molp} can be obtained by solving $m$-SDP problems being $m$ the number of non-redundant inequalities of the original problem. This shows that interior point methodologies can be also used to determine the Pareto-optimal set of \eqref{molp}.

\section{A polynomal system of inequalities encoding the Pareto-optimal set of MOLP\label{s:sdpmo}}

Note that by Lemma \ref{lem:2}, solving \eqref{molp} is equivalent to solve the following parametric family of single objective problems:
\begin{align}
\tag{${\rm LP}_\lambda$}  \min\; & \dsum_{\ell=1}^k \lambda_{\ell} c^{\ell} x \label{lpl}\\
 \mbox{s.t. } &  Ax \geq b \nonumber \\
& x \geq 0\nonumber
\end{align}
for all $\lambda \in \Lambda:=\{ \lambda \in \R^k_+:\; \sum_{\ell}^k \lambda_{\ell}=1\}$.

For each $\lambda \in \Lambda$, the dual of \eqref{lpl} is:
\begin{align}
\tag{${\rm DLP}_\lambda$} \max\;& \dsum_{j=1}^m u_jb_j\label{dlpl}  \\
\mbox{s.t. } & u^tA\leq \dsum_{i=1}^k \lambda_i c^i \nonumber\\
 &u \geq 0\nonumber
\end{align}

Hence, by Lemma \ref{lem:4}, a solution of \eqref{molp} must be a solution of the following system of polynomial equations/inequalities:
\begin{align}
& u^t (b-Ax)= 0\nonumber\\
& (\dsum_{i=1}^k \lambda_i c^i - u^tA) x = 0\nonumber\\
& Ax \geq b\label{system1}\tag{${\rm Sys1}$}\\
& u^tA\leq \dsum_{i=1}^k \lambda_i c^i\nonumber\\
& u, x\ge 0,\; \; \lambda \in \Lambda \nonumber
\end{align}
Solutions of this system are triplets $(x,u,\lambda)$ such that $x$ is a Pareto-optimal solution of \eqref{molp}, and optimal solution of $(PL_{\lambda})$; and $u$ is optimal solution of $(DLP_{\lambda})$. We will call such a triplet a \textit{`valid triplet'}.
However, we observe that the above system may have an infinite number of solutions because there may be a continuum number of solutions in $x$ and $u$ since the Pareto-optimal set is a connected union of faces of the polyhedron $Ax\ge b$, $x\ge0$. Nevertheless by the edge-connectedness of the Pareto-optinal set, it suffices to know the  Pareto-optimal points that are extreme points in the feasible region to reconstruct the entire set (see for instance \cite{ArmandMalivert91,ecker-kouada}). Therefore, our goal is to reduce the solutions of the above system to a finite number of the original ones (extreme points) that are enough to reconstruct the entire set of Pareto-optimal solutions. Hence, we shall transform  System \eqref{system1} so as to characterize only extreme points of the original feasible region.

It is worth mentioning that a similar approach could be applied to general multiojective convex optimization. Indeed, in that case we have to replace the use of System (\ref{system1}) by  using Kuhn-Tucker optimality conditions on the parametric family of weighted problems. The only inconvenient is that in that case the system may have a continuum of solutions and in general, one cannot build the entire Pareto-optimal set with a finite set of representatives, as we do in the linear case. This implies  that  finiteness results cannot be obtained. Another different framework where a similar approach is applicable is in multiobjective polynomial combinatorial optimization. The interested reader is referred to \cite{BP2009,BP2011} for further details.

As it is usual, let $B$ denote a basis of $A$, i.e. a full rank submatrix of $A$. In addition and when there is no possible confusion, we will also use $B$ as the set of the indices of its columns. Analogously,   $N$ denotes the set of columns of $A$ not in $B$ and $c_B^{\ell}$ are the coefficients of the $\ell$-th objective function that corresponds to variables in the basis $B$. Finally, with $x_B$ and $x_N$, we refer to the variables corresponding to the columns in $B$ and $N$, respectively.

Let us denote by \eqref{usysbi} for  $B$ a full rank submatrix of $A$ and $i=1,\ldots,m$, the system:
\begin{align}
\tag{\rm U-Sys-B-i} \quad \dsum_{\ell=1}^k \lambda_{\ell} c^{\ell}-u_{-i}^tA_{-i}\geq A_{i\cdot} & \label{usysbi}\\
\mbox{\hspace*{1.5cm}}  \quad \dsum_{\ell=1}^k \lambda_{\ell} c^{\ell}_BB^{-1}A_{.j}-\dsum_{\ell=1}^k \lambda_{\ell} c_j^{\ell}\le 0,& \forall j\in N \nonumber \\
\sum_{\ell =1}^k \lambda_{\ell} =1 \nonumber
\end{align}
where $u_{-i}$, $A_{-i}$ stand, respectively, for the vector of variables $u$ without $u_i$ and the matrix $A$ where we have removed the $i$-th row. In the same way, $A_{i\cdot}$ is the $i$-th row, $A_{\cdot j}$ is the $j$-th column and $A_{ij}$ is the $(i,j)$ element of $A$, respectively.

Finally, let $M$ be the least common multiple of all the determinants of full rank submatrices of $A$ and $M_i$ be the least common multiple of all the determinants of full rank submatrices of \eqref{usysbi}, for all $i=1,\ldots,m$ and $B$.

\begin{theo} \label{t:MOLP}
If $x$ is a Pareto-optimal solution, extreme point of the feasible region of \eqref{molp} then $Mx$ is the projection onto the first $n$-components of a solution of at least one of the systems \eqref{systemi}, for some $i=1,\ldots,m$:

\begin{minipage}{10cm}
\begin{align}
& h_0^0(\lambda):= \sum_{\ell =1}^k \lambda_{\ell}-M_i=0\\
& h_1^0(x,u):=u^t (Mb-Ax)= 0\nonumber\\
& h_2^i(x,u):=(\dsum_{\ell=1}^k \lambda_{\ell} c^{\ell} - u_{-i}^tA_{-i}-M_iA_{i\cdot}) x = 0\nonumber\\
& g_s^0(x):=A_{s\cdot}x-Mb_s \geq 0,\; s=1,\ldots,m, \label{c:axb}\\
& g_j^i(u,\lambda):=\dsum_{\ell=1}^k \lambda_{\ell} c^{\ell}_j-u_{-i}^t(A_{-i})_{\cdot j}-M_iA_{ij}\ge 0,\; j=1,\ldots,n, \label{c:lamu}\\
& p_j(x):= \prod_{\ell=0}^{ub_j^PM} (x_j-\ell)=0, \quad j=1,\ldots,n,\nonumber \\
& q_s^i(u):= \prod_{\ell=0}^{ub_s^DM_i} (u_s-\ell)=0, \quad s=1,\ldots,m, \; s\neq i,\nonumber \\
& t_r^i(\lambda):=\prod_{\ell=0}^{M_i} (\lambda_r-\ell)=0, \quad r=1,\ldots,k.\nonumber
\end{align}
\end{minipage}
\begin{minipage}{1cm}
$$
\left\}\begin{array}{c}
\\
\\
\\
\\
\\
\\
\\
\\
\\
\\
\\
\\
\\
\\
\\
\\
\\
\end{array}\right.
$$
\end{minipage}
\begin{minipage}{1.75cm}
 \begin{equation}
\label{systemi}\tag{${\rm Sys-i}$}
\end{equation}
\end{minipage}

Conversely, any of the finitely many solutions of the systems \eqref{systemi} for all $i=1,\ldots,m$ induces a Pareto-optimal solution of \eqref{molp} and all the Pareto-optimal extreme points are included among them.
\end{theo}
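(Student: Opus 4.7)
My plan is to prove both directions of the equivalence by combining scalarization (Lemma~\ref{lem:2}), LP duality (Lemma~\ref{lem:3}) and complementary slackness (Lemma~\ref{lem:4}) with rationality arguments at LP vertices.

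For the forward direction, let $x^*$ be a Pareto-optimal extreme point of \eqref{molp}. By Lemma~\ref{lem:2} some $\lambda^*\in\Lambda$ makes $x^*$ an optimum of \eqref{lpl} at $\lambda=\lambda^*$; by Lemma~\ref{lem:3} the dual \eqref{dlpl} is bounded and attains an optimum $u^*\ge 0$; by Lemma~\ref{lem:4} the triplet $(x^*, u^*, \lambda^*)$ satisfies \eqref{system1}. Because $x^*$ is a vertex of $\{Ax\ge b,\; x\ge 0\}$, it is the unique solution of a full-rank linear subsystem extracted from the rows of $A$ together with tight non-negativity constraints; Cramer's rule then forces each $x^*_j$ to be rational with denominator dividing the determinant of a full-rank submatrix of $A$, so $y:=Mx^*$ has integer entries bounded by $ub^P_j M$, which yields $p_j(y)=0$ and $g_s^0(y)\ge 0$. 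I then select the index $i$ by picking a basis $B$ and a vertex of the polyhedron of admissible pairs $(\lambda,u_{-i})$ attached to $x^*$, i.e.\ of the system \eqref{usysbi}; Cramer's rule applied to that vertex makes $M_i\lambda^*$ and $M_i u^*_{-i}$ integer and within the prescribed $t_r^i$ and $q_s^i$ ranges.

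With these data I verify every constraint of \eqref{systemi} by direct substitution. Setting $y = Mx^*$, $\mu = M_i\lambda^*$ and $v_s = M_i u^*_s$ for $s\neq i$ (the $i$-th dual variable being implicitly $M_i$ inside $h_2^i$ and $g_j^i$), the two complementary slackness identities of \eqref{system1} rescale to $h_1^0=0$ and $h_2^i=0$, primal and dual feasibility scale to $g_s^0\ge 0$ and $g_j^i\ge 0$, the normalization $\sum_\ell \lambda_\ell=1$ becomes $h_0^0=0$, and integrality/bound conditions give $p_j=q_s^i=t_r^i=0$. For the converse, starting from any solution $(y,v,\mu)$ of \eqref{systemi} for some $i$, I reverse the rescaling: set $x^*=y/M$, $\lambda^*=\mu/M_i\in\Lambda$, and recover $u^*$ from $v/M_i$ with $u^*_i=1$. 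Substitution shows $(x^*,u^*,\lambda^*)$ satisfies \eqref{system1}, so Lemma~\ref{lem:4} gives optimality of $x^*$ for \eqref{lpl}, and Lemma~\ref{lem:2} upgrades this to Pareto-optimality for \eqref{molp}. Finiteness of the union of solution sets is immediate from the polynomial factorizations $p_j, q_s^i, t_r^i$, which confine every relevant coordinate to a finite integer grid.

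The most delicate step is the choice of $i$ and $B$ in the forward direction: the simultaneous rescaling by $M$ on the primal side and by $M_i$ on the dual side must be compatible with the normalization $\sum_\ell \mu_\ell = M_i$ and with the implicit value $M_i$ of $u_i$ inside \eqref{systemi}. This requires a vertex-selection argument within the polytope of valid dual multipliers attached to $x^*$, for which edge-connectedness of the Pareto-optimal set and standard LP-vertex rationality supply the needed combinatorial handle; the remaining verifications then reduce to mechanical substitutions in the scaled system.
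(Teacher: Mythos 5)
Your proposal follows essentially the same route as the paper's own proof: scalarization via Lemma~\ref{lem:2}, complementary slackness via Lemma~\ref{lem:4}, Cramer's-rule rationality at the primal vertex to make $Mx$ integral, and selection of an extreme point of the system \eqref{usysbi} (the paper's matrix $L_i$) to make $M_i\lambda$ and $M_iu_{-i}$ integral, with the index $i$ chosen so that the $i$-th dual component can be normalized. The delicate normalization/vertex-selection step you flag is handled with the same brevity (an assumed ``w.l.o.g.'' choice of a valid triplet with $u_i>0$) in the paper itself, so your argument matches it in both structure and level of detail.
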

\begin{proof}
Let $x$ be a Pareto-optimal extreme point solution of \eqref{molp}. Being an extreme point of the feasible region $Ax\ge b,\; x\ge 0$ means that there exists a basis $B$ which defines $x$ such that $x=\left(\begin{array}{c} x_B \\ 0 \end{array}\right)$ and $x_B=B^{-1}b\ge 0$. Next, clearly $Mx=\left(\begin{array}{c} Mx_B \\ 0 \end{array}\right)$, has integer coordinates in the range $[0,ub^PM]$. Besides, by Lemma \ref{lem:2}, $x$ must be optimal for the problem $(PL_{\lambda(x)})$ for some $\lambda(x)$ (Note that this weighting coefficient may depend on $x$). The optimality condition of $x$ translates into the following necessary and sufficient condition for any valid $\lambda(x)$:
\begin{eqnarray} \label{eq:co-lam}
 \sum_{\ell =1}^k \lambda_{\ell} c_B^{\ell} B^{-1} A_{\cdot j}-\sum_{\ell=1}^k \lambda_{\ell} c_j^{\ell}\le 0; & \quad  \forall\; j\in N,\\
 \sum_{\ell =1}^k \lambda_{\ell} =1, & \label{eq:sum-lam}
\end{eqnarray}
where as usual $N$ denotes the set of columns of $A$ not in $B$ and $c_B^{\ell}$ are the coefficients of the $\ell$-th objective function that correspond to variables in the basis $B$.

Moreover, by Lemma \ref{lem:4}, each pair $(x,\lambda(x))$, where $\lambda(x)$ is defined by (\ref{eq:co-lam})-(\ref{eq:sum-lam}),  has associated an optimal extreme solution of $(DLP_{\lambda(x)})$ such that $(x,u(x),\lambda(x))$ is a valid triplet, i.e. it satisfies System (\ref{system1}). By the fact that $u(x)$ is an extreme solution of  $(DLP_{\lambda(x)})$ it must satisfy:
\begin{equation}
\label{eq:ulam} u^tA- \dsum_{\ell=1}^k \lambda_{\ell} c^{\ell}\le 0
\end{equation}
Assume w.l.o.g. that for some valid triplets $(x,u(x),\lambda(x))$ the $i$-th component of $u(x)$ is positive, i.e. $u(x)_i> 0$, then there is always one of these valid triplets  for $x$ so that $(u(x),\lambda(x))$ is  an extreme solution of the system of linear inequalities defined by (\ref{eq:co-lam}), (\ref{eq:sum-lam}) and (\ref{eq:ulam}). This system written in matrix form is:
\begin{equation*}
L_i\left[ \begin{array}{c} u_{-i} \\ \lambda \end{array} \right]:=\left[ \begin{array}{cc} \Theta & C_BB^{-1}N-C_N \\ A_{-i}^t & C \\ \underline{0} & \underline{1} \end{array} \right] \left[ \begin{array}{c} u_{-i} \\ \lambda \end{array} \right] \begin{array}{c} \le \\ \le \\ = \end{array} \left[ \begin{array}{c}  0 \\  A_{i\cdot}^t \\ 1 \end{array} \right] .
\end{equation*}
Next, any extreme solution of this system must be associated with a basis of the matrix $L_i$. Therefore if $M_i$ is the least common multiple of the determinants of all full rank submatrices of $L_i$, then $(M_i (u_{-i},1), M_i\lambda)$ has integer coordinates in the range $[0,ub^D M_i]$ for $u$ and $[0,M_i]$ for $\lambda$.

Finally, it is clear by our construction that $(Mx,M_iu(x),M_i\lambda(x))$ is a solution of  \ref{systemi}.

Conversely, by Lemma \ref{lem:4}, any solution $(x,u(x),\lambda(x))$ of \ref{systemi} for some $i=1,\ldots,m$ defines a Pareto-optimal triplet $(x/M,u(x)/M_i,\lambda(x)/M_i)$ of \eqref{molp}. In addition, we have proved above that all Pareto-optimal extreme point solutions of \eqref{molp} are among the solutions of the systems \eqref{systemi} for $i=1,\ldots,m$ which concludes the proof.
\end{proof}

The above transformation makes use of big upper bounds to ensure that extreme points of all systems of rational inequalities that come from feasibility and optimality of valid triplets are integer. In most cases, those bounds can be strengthened taking advantage of the particular structure of the problems given rise to sharper bounds. In particular if the primal, the dual or both are integer polytopes one can take $M=1$, $M_i=1$, for all $i=1,\ldots,m$ or both $M=M_i=1$ for all $i=1,\ldots,m$; respectively. Eventually, we will need only to transform the range of the lambda variables to make them integer.

To finish this section, we briefly remark the meaning of Theorem \ref{t:sdpmo}. We have proved that the entire set of Pareto-optimal extreme points is encoded in the set of solutions of $m$ system of polynomial inequalities, projecting the first $n$ components of their solutions. (Observe that this set may include  some extra Pareto-optimal solutions not being extreme points.) The finiteness of the solution sets of these systems is crucial to develop an exact method to get the entire Pareto-optimal set, as we will show in the next section.

\section{Semidefinite programming versus Multiobjetive Linear programming\label{s:cuatro}}
In this section we describe how to obtain the Pareto-optimal set of any MOLP by applying tools  borrowed from the theory of moments and SDP.
We use standard notation in the field (see e.g. \cite{lasserrebook}).

We denote by $\mathbb{R}[x,u,\lambda]$ the ring of real polynomials in the variables $x=(x_1,\ldots,x_n),\; u=(u_1,\ldots,u_m),\; \lambda=(\lambda_1,\ldots,\lambda_k)$; and by $\mathbb{R}[x,u,\lambda]_d \subset \R[x,u,\lambda]$ the space of polynomials of degree at most $d \in \N$ (here $\N$ denotes the set of nonnegative integers). We also denote by $\B = \{x^\alpha u^\beta \lambda^\gamma: (\alpha,\beta,\gamma)\in\mathbb{N}^{n\times m\times k}\}$ a
canonical basis of monomials for $\R[x,u,\lambda]$, where $x^\alpha u^\beta \lambda^\gamma = x_1^{\alpha_1} \cdots x_n^{\alpha_n}u_1^{\beta_1} \cdots u_m^{\beta_m}\lambda_1^{\gamma_1} \cdots \lambda_k^{\gamma_k}$, for any $(\alpha,\beta,\gamma) \in \N^{n\times m\times k}$.

For any sequence indexed in
the canonical monomial basis $\B$, $\mathbf{y}=(y_{\alpha\beta\gamma})\subset\mathbb{R}$, let $\L_\mathbf{y}:\mathbb{R}[x,u,\lambda]\to\mathbb{R}$ be the linear functional defined, for any $f=\sum_{\alpha\beta\gamma\in\mathbb{N}^{n\times m\times k}}f_{\alpha\beta\gamma}\,x^\alpha u^\beta \lambda^\gamma \in \R[x,u,\lambda]$, as $\L_\mathbf{y}(f) :=
\sum_{\alpha\beta\gamma\in\mathbb{N}^{n\times m\times k}}f_{\alpha\beta\gamma}\,y_{\alpha\beta\gamma}$.

The \textit{moment} matrix $\Mo_d(\mathbf{y})$ of order $d$ associated with $\mathbf{y}$, has its rows and
columns indexed by $(x^\alpha u^\beta \lambda^\gamma)$ and $\Mo_d(\mathbf{y})(\alpha\beta\gamma,\alpha' \beta' \gamma')\,:=\,\L_\mathbf{y}(x^{(\alpha+\alpha')}u^{(\beta+\beta')}\lambda^{(\gamma+\gamma')})\,=
\,y_{(\alpha+\alpha')(\beta+\beta')(\gamma+\gamma')}$, for $\vert\alpha\beta\gamma\vert,\,\vert\alpha'\beta'\gamma'\vert\,\leq d.$

For $g:=\sum_{\delta\in \N^{n\times m\times k}} g_\delta (xu\lambda)^\delta\in \mathbb{R}[x,u,\lambda]$, the \textit{localizing%
} matrix $\Mo_d(g, \mathbf{y})$ of order $d$ associated with $\mathbf{y}$
and $g$, has its rows and columns indexed by $((xu\lambda)^\delta)$ and $\displaystyle \Mo_d(g,\mathbf{y})(\alpha\beta\gamma,\alpha' \beta' \gamma'):=\L_\mathbf{y}(x^{(\alpha+\alpha')}u^{(\beta+\beta')}\lambda^{(\gamma+\gamma')}%
g(xu\lambda))=
\sum_{\delta\in \N^{n\times m\times k}}g_\delta y_{\delta+(\alpha+\alpha')(\beta+\beta')(\gamma+\gamma')}$, for $\vert\alpha\beta\gamma\vert,\vert\alpha' \beta' \gamma'\vert\,\leq d$.

Depending on their parity, let $2 \zeta_j$ or $2\zeta_j-1$ be the degree of $p_j$, $j=1,\ldots, n$;  $2 \eta_s^i$ or $2\eta_s^i-1$ be the degree of $q_s^i$, $s=1,\ldots, m$, $i=1,\ldots,m,\; s\neq i$ and $2 \nu_r^i$ or $2\nu_r^1-1$ be the degree of $t_r^i$, $r=1,\ldots, k$,  $i=1,\ldots,m$. Recall that these functions were defined associated with some of the constraints that appear in (\ref{systemi}). Finally, for any symmetric matrix $P$ by $P \succeq 0$ we refer to $P$ being positive semidefinite.

With this notation, we are in position to present our next result:

\begin{theo}  \label{t:sdpmo}
The entire set of  Pareto-optimal extreme point solutions of \eqref{molp} can be obtained by solving $m$  semidefinite programs (\ref{p:sdpmoj}) $i=1,\ldots,m$, for some $N^*\in \mathbb{N}$.
\begin{align}
\min\; \; & y_0:=1 \label{p:sdpmoj} \tag{${SDP^i-N^*}$}\\
\mbox{s.t. }
 & \Mo_{N^*}(x,u,\lambda)\succeq 0,\nonumber  \\
 & \Mo_{N^*-1}(h_0^0(\lambda))=0, \nonumber \\
 & \Mo_{N^*-1}(h_1^0(x,u))=0,\nonumber \\
 & \Mo_{N^*-1}(h_2^i(x,u,\lambda))=0,\nonumber \\
 & \Mo_{N^*-1}(g_s^0(x))\succeq 0,\; s=1,\ldots,m,\nonumber \\
 & \Mo_{N^*-1}(g_j^i(u,\lambda))\succeq 0,\; j=1,\ldots,n,\nonumber \\
 & \Mo_{N^*-\zeta_j } (p_j(x))= 0, \; \; j=1,\ldots,n,\\
 & \Mo_{N^*-\eta_s^i} (q_s^i(u))= 0, \; \; s=1,\ldots,n,\; s\neq i\\
 & \Mo_{N^*-\nu_r^i} (t_r^i(u))= 0, \; \; r=1,\ldots,k.
\end{align}
\end{theo}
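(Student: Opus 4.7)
The plan is to recognize each $(SDP^i\text{-}N^*)$ as the order-$N^*$ Lasserre moment relaxation of the polynomial feasibility system \eqref{systemi}, and to invoke finite convergence of the moment hierarchy on zero-dimensional varieties together with Henrion--Lasserre extraction to conclude that, for $N^*$ large enough, every feasible moment sequence of the SDP admits an atomic representing measure supported exactly on the solution set of \eqref{systemi}.

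First I would check that the feasible set of \eqref{systemi} is finite. The polynomial equations $p_j(x)=0$, $q_s^i(u)=0$ and $t_r^i(\lambda)=0$ restrict the coordinates of $x$, $u$ and $\lambda$ to explicit finite lists of nonnegative integers contained respectively in $[0,ub_j^P M]$, $[0,ub_s^D M_i]$ and $[0,M_i]$. Hence the real variety $V_i$ cut out by the equality constraints of \eqref{systemi} is a zero-dimensional subset of $\R^{n+m+k}$, and its intersection $V_i\cap K_i$ with the basic closed semialgebraic set $K_i$ defined by the inequality constraints \eqref{c:axb} and \eqref{c:lamu} is also finite. By Theorem \ref{t:MOLP}, every Pareto-optimal extreme point of \eqref{molp} is obtained as $x^*/M$ for some $(x^*,u^*,\lambda^*)\in\bigcup_{i=1}^{m}(V_i\cap K_i)$.

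Next I would identify $(SDP^i\text{-}N^*)$ as the canonical Lasserre relaxation associated with the system \eqref{systemi}: the positive semidefiniteness of $\Mo_{N^*}(y)$ together with $\Mo_{N^*-d}(g,y)\succeq 0$ and $\Mo_{N^*-d}(h,y)=0$ precisely encodes that $y$ be a truncated moment sequence of a nonnegative Borel measure supported on the constraint set, up to degree $2N^*$. Because the equations $p_j,q_s^i,t_r^i$ already carry Archimedeanness and annihilate exactly the finite set $V_i$, results on finite convergence of the moment hierarchy on zero-dimensional varieties (Lasserre \cite{lasserrebook,lasserre22}, and Laurent's flat-extension arguments) guarantee the existence of an integer $N^*$ such that every feasible $y$ of $(SDP^i\text{-}N^*)$ satisfies the flat-extension rank condition $\rank \Mo_{N^*}(y)=\rank \Mo_{N^*-d_0}(y)$ for an appropriate $d_0$, and is therefore uniquely represented by a finitely atomic measure supported on $V_i\cap K_i$.

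Finally, once such a feasible $y$ is computed, the moment matrix algorithm of \cite{lasserrebook} extracts all atoms of this measure, producing the full list of points $(x^*,u^*,\lambda^*)\in V_i\cap K_i$. By Theorem \ref{t:MOLP}, the projections $x^*/M$ of these atoms are Pareto-optimal solutions of \eqref{molp}, and iterating over $i=1,\ldots,m$ yields every Pareto-optimal extreme point. The main obstacle is certifying the existence of a single $N^*$ for which the flat-extension condition is forced by the constraints; this is the step where the finiteness of $V_i$ and the explicit bounding polynomials $p_j,q_s^i,t_r^i$ are essential, since they ensure that the truncated quadratic module already contains enough information to pin down the ideal of $V_i$ at finite order, making the hierarchy terminate exactly rather than only converging asymptotically.
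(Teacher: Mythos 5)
Your proposal is correct and follows essentially the same route as the paper: reduce to Theorem \ref{t:MOLP}, observe that each system \eqref{systemi} has finitely many solutions (with the gridding polynomials $p_j$, $q_s^i$, $t_r^i$ guaranteeing compactness and the Archimedean/Putinar condition), and then invoke Lasserre's finite-convergence and extraction results (Theorem 6.1 and the moment matrix algorithm in \cite{lasserrebook}) to get exactness of \eqref{p:sdpmoj} at some finite $N^*$. If anything, your justification of the Putinar condition via the explicit univariate gridding equations is slightly more careful than the paper's appeal to compactness alone.
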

\begin{proof}
Theorem \ref{t:MOLP} proves that the entire set of Pareto-optimal extreme point solutions of \eqref{molp} can be obtained as  the projection of the solutions of systems (\ref{systemi}), $i=1,\ldots,m$. Each of these systems  is defined by a closed, compact semi-algebraic set with finitely many feasible solutions that by compacity, clearly satisfy Putinar's condition (see \cite{lasserrebook}).  
Hence, we can apply \cite[Theorem 6.1]{lasserrebook} to conclude that there exists $N^*< +\infty$ so that all the solutions of System (\ref{systemi}) can be obtained by solving the feasibility SDP problem  (\ref{p:sdpmoj}) for all $i=1,\ldots,m$.
\end{proof}

We note in passing that the finiteness of $N^*$ is explicit and it can be bounded above by some known constants which depend on the input data. The interested reader is referred to \cite{NS2007} and \cite{Schw2004}.

As a direct consequence of the equivalence between MOLP and solving a finite series of $m$-SDP (Theorem \ref{t:sdpmo}); and the fact that SDP can be solved by interior points algorithms we conclude the following corollary.

\begin{cor}
All Pareto-optimal extreme solutions of \eqref{molp} can be obtained using  interior point algorithms.
\end{cor}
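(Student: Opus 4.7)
The plan is to combine Theorem \ref{t:sdpmo} with the classical fact that every semidefinite feasibility program admits polynomial-time interior point methods (Nesterov--Nemirovski). Theorem \ref{t:sdpmo} already delivers a finite family of $m$ SDPs, namely \eqref{p:sdpmoj} for $i=1,\ldots,m$ at some finite relaxation order $N^*$, whose combined solution sets encode every Pareto-optimal extreme point of \eqref{molp}. Consequently, the only remaining task is to argue that each individual SDP can actually be tackled by an interior point scheme, and that the Pareto-optimal points can subsequently be read off the resulting moment data.

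Concretely, I would proceed in three steps. First, fix an explicit value of $N^*$ using the complexity bounds referenced immediately after Theorem \ref{t:sdpmo} (see \cite{NS2007,Schw2004}); these ensure $N^* < +\infty$ so that the sizes of all moment and localizing matrices appearing in \eqref{p:sdpmoj} are a priori bounded. Second, for each $i=1,\ldots,m$ solve the feasibility SDP \eqref{p:sdpmoj} by any standard primal--dual or barrier interior point algorithm for semidefinite programming; this produces an optimal moment sequence $\mathbf{y}^{(i)}$. Third, from each $\mathbf{y}^{(i)}$ apply the moment-matrix atom-extraction procedure from \cite{lasserrebook} to recover the finitely many triplets $(Mx, M_i u, M_i \lambda)$ corresponding to the $i$-th system of Theorem \ref{t:MOLP}. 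Taking the union over $i$ of the $x$-projections, after rescaling by $1/M$, yields every Pareto-optimal extreme point of \eqref{molp}.

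The main, and essentially only, subtlety is interpretational: the phrase \emph{``obtained using interior point algorithms''} should be read as \emph{``obtained by solving problems that are natively solved by interior point methods''}. The atom-recovery step is a purely linear-algebraic postprocessing of the output produced by the interior point SDP solver, and it does not alter the fact that all of the nontrivial optimization effort is carried out by an interior point method. With that convention, the corollary follows at once from Theorem \ref{t:sdpmo} by invoking any of the now-standard interior point SDP solvers on the $m$ programs \eqref{p:sdpmoj}.
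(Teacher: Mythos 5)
Your proposal is correct and follows essentially the same route as the paper: the corollary is presented there as an immediate consequence of Theorem \ref{t:sdpmo} together with the fact that SDPs are solved by interior point methods, with the atom extraction handled afterwards by the moment matrix algorithm (Theorem \ref{t:eigen}). Your explicit acknowledgement that the rank-based extraction step is linear-algebraic postprocessing, rather than part of the interior point computation itself, is a fair and accurate reading of the claim as the authors intend it.
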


It is well-known that solving SDP problems is polynomially doable. Therefore, one may conclude from the above reformulation that obtaining the entire Pareto-optimal set of \eqref{molp} is polynomial. However, we cannot conclude this result from Theorem \ref{t:sdpmo}. The reason being that the dimension of the $m$-SDP problems that need to be solved are not polynomial in the size of the original problem because $N^*$ is finite but its size may be exponential in $n,m,k$. Therefore, the above approach only gives a  pseudo-polynomial approach to obtain the Pareto-optimal set of MOLP programs.

Finally, we want to describe, based on the above results, an explicit   methodology to enumerate the real solutions of the above closed semi-algebraic sets (\ref{systemi}), $i=1,\ldots,m$. (Note that this is equivalent to enumerate all the optimal solutions of a polynomial optimization problem where the objective is a constant; see Theorem \ref{t:sdpmo}.)

In order to do that, first we transform w.l.o.g. (\ref{systemi}) into an algebraic set, in  the standard manner, by simply adding non-positive slack variables $w\in \mathbb{R}^m_-$, $z\in \mathbb{R}^n_-$ to the constraints \eqref{c:axb} and \eqref{c:lamu}, respectively. Thus, from now on we assume that all the constraints in the SDP relaxations (\ref{p:sdpmoj}) are in equation form.

For any $i =1, \ldots, m$, consider $J^i =  \langle h_0^0, h_1^0,h_2^i,g_1^0,\ldots,g_m^0,g_1^i,\ldots,g_n^i,p_1,\ldots,p_n,q_1^i, \ldots,q_m^i,t_1^i,\dots,t_k^i \rangle$ the zero-dimensional ideal in $\R[x,u,\lambda]$, generated by the polynomials defining  the System (\ref{systemi}). Note that the ideal $J^i$ to be zero dimensional  is equivalent to suppose that there is a finite number of solutions of the set of polynomial equations defining the system. This set of solutions is denoted by $V(J^i)$ and it is usually called the variety of $J^i$. The goal is then to compute the points in $V(J^i)$.

In order to do this we resort to the moment approach (see Lasserre et al. \cite{LLR2008b}, Lasserre \cite{lasserrebook}). It is clear that among all the finite probability measures defined on the support of our feasible set $V(J^i)$, those assigning positive probability to all the solutions of $V(J^i)$ give maximal range to the moment matrix $\Mo_{N^*}(x,u,\lambda)$, and for sufficiently large $N^*$ $rank (\Mo_{N^*}(x,u,\lambda))=|V(J^i)|$. Moreover, since $V(J^i)$ is finite, by Theorem \ref{t:sdpmo}, there is an index $N^*$ such that all the SDP relaxations \ref{p:sdpmoj}, $i=1,\ldots,m$, are exact and their solutions are moment sequences of probability measures with support on $V(J^i)$.  Hence, the idea is to translate that condition, i.e. that we look for measures which give positive probability to all points in $V(J^i)$, on the moment variables of our SDP relaxations (\ref{p:sdpmoj}), $i=1,\dots,m$. In this way we will get the points in $V(J^i)$ as solutions of the relaxations. This underlying approach was theoretically justified by Lasserre et al. \cite{LLR2008b} and Lasserre \cite[Theorem 6.6]{lasserrebook} and can be effectively implemented by using the moment matrix algorithm as described in \cite[Algorithm 6.1]{lasserrebook}. The reader is referred to \cite{LLR2008b,Laurent2007} for further details.

Let us consider the quotient space $\R[x,u,\lambda]/J^i$ whose elements are the cosets $[f]  = \{f + q: q \in J\}$ for any $f \in \R[x,u,\lambda]$. Since $J^i$ is zero-dimensional, $\R[x,u,\lambda]/J^i$ is a finite dimensional $\R$-vector space with the usual addition and scalar product. Furthermore, $\R[x,u,\lambda]/J^i$ is an algebra with multiplication $[f][g] = [fg]$. If $\B_{J^i}$ is a basis of $R[x,u,\lambda]/J^i$, then the \textit{multiplication matrix} $\Mo_h$  associated with the multiplication operator $m_h: \R[x,u,\lambda]/J^i \rightarrow \R[x,u,\lambda]/J^i$, $m_h([f]) = [fh]$ for $h \in \R[x,u,\lambda]$, plays and important role for obtaining the points in $V(J^i)$.

Let $d=\max\{ \max_{j=1...n} \zeta_j, \max_{\scriptsize{ \begin{array}{c} s=1...m,\\ i=1...n \end{array}}}   \eta_s^i\}$ the maximum half degree of the polynomials defining System (\ref{systemi}), $i=1,\ldots,m$. Finally, let us denote by $R($\ref{p:sdpmoj}$)$ the feasible region of Problem (\ref{p:sdpmoj}), $i=1,\ldots,m$.

The above discussion allows us to apply to the variety $V(J^i)$, theorems 6.2 and 6.5 in \cite{lasserrebook} and Proposition 4.6 in \cite{LLR2008b}  which leads us to state our final result.

\begin{theo} \label{t:eigen}
For $N^*$ large enough, there exists $d \leq t \leq N^*$ such that:
$$ \rank \Mo_{t}(x,u,\lambda)=\rank \Mo_{t-d}(x,u,\lambda) = |V(J^i)|, \quad \forall (x,u,\lambda)\in R(\mbox{\ref{p:sdpmoj}}),\; \forall i=1,\ldots,n.$$
Moreover, one can obtain the coordinates of all $(x,u,\lambda)\in V(J^i),\; \forall i=1,\ldots,n,$ as the eigenvalues of multiplication matrices.
\end{theo}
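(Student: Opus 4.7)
The plan is to invoke the finite convergence and flat-extension framework for moment relaxations of zero-dimensional semi-algebraic sets, together with Stickelberger's eigenvalue method, essentially transcribing Theorems 6.2, 6.5 and 6.6 of \cite{lasserrebook} to our setting.

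First, I would check that the ideal $J^i$ is zero-dimensional. The univariate polynomials $p_j(x)$, $q_s^i(u)$, $t_r^i(\lambda)$ force every coordinate to take a value in a finite grid of integers, so $V(J^i)\subseteq \R^{n+m+k}$ is necessarily finite and contained in a compact box; in particular Putinar's archimedean condition holds for the quadratic module generated by the constraints of (\ref{systemi}). Together with Theorem \ref{t:sdpmo}, this ensures that for $N^*$ large enough any feasible moment sequence $\mathbf{y}$ of (\ref{p:sdpmoj}) is the truncated moment sequence of a probability measure supported on $V(J^i)$.

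Next, once moment sequences represent measures on the finite variety $V(J^i)$, I would apply the Curto--Fialkow flat extension theorem \cite[Theorem 6.2]{lasserrebook}: if $\rank \Mo_t(\mathbf{y})=\rank \Mo_{t-d}(\mathbf{y})$ for some $t\geq d$, then $\mathbf{y}$ is the moment sequence of a unique atomic measure having exactly $\rank \Mo_t(\mathbf{y})$ atoms, all lying in $V(J^i)$. By \cite[Theorem 6.5]{lasserrebook}, such a stabilization order $t \in [d,N^*]$ is guaranteed to exist once $N^*$ is sufficiently large. Because (\ref{p:sdpmoj}) is a pure feasibility problem (constant objective), an interior-point method returns a solution in the relative interior of the feasible spectrahedron, and by \cite[Proposition 4.6]{LLR2008b} together with \cite[Theorem 6.6]{lasserrebook} the corresponding atomic measure is of maximal cardinality and therefore assigns positive mass to every point of $V(J^i)$. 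Consequently $\rank \Mo_t(\mathbf{y})=\rank \Mo_{t-d}(\mathbf{y})=|V(J^i)|$, which is the rank identity in the statement.

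For the extraction part, I would pass to the finite dimensional $\R$-algebra $\R[x,u,\lambda]/J^i$, whose dimension is $|V(J^i)|$. Using the rank-revealing columns of $\Mo_t(\mathbf{y})$ one identifies a monomial basis $\B_{J^i}$ of the quotient, and multiplication by any $h\in \R[x,u,\lambda]$ is represented in this basis by a matrix $\Mo_h$ whose entries are computed directly from $\mathbf{y}$ via the relation $\Mo_{x_j}(\alpha,\alpha')=y_{\alpha+\alpha'+e_j}$ (and the analogous formulas for $u_s,\lambda_r$), since the Riesz functional $\L_\mathbf{y}$ descends to the quotient. By Stickelberger's theorem the eigenvalues of $\Mo_h$ are precisely $\{h(\xi):\xi\in V(J^i)\}$; specialising $h$ successively to each coordinate function, or computing the common eigenvectors of a generic real combination of all these commuting multiplication matrices, recovers the full coordinates of every point of $V(J^i)$. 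This can be implemented with the moment matrix algorithm \cite[Algorithm 6.1]{lasserrebook}.

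The main obstacle is the uniform rank identity \emph{over all} $(x,u,\lambda)\in R(\ref{p:sdpmoj})$: a priori an arbitrary feasible moment sequence could correspond to a measure supported on a strict subset of $V(J^i)$, giving a smaller rank. It is precisely the maximality property of interior-point iterates (they converge to a point of maximal rank in the feasible spectrahedron) that closes this gap; this is where the quoted results from Lasserre and Laurent must be invoked carefully. Once that is in place the remaining steps are routine linear algebra in the quotient ring.
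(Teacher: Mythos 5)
Your proposal follows essentially the same route as the paper: both rest on the zero-dimensionality of $J^i$ (forced by the univariate grid polynomials), invoke Theorems 6.2, 6.5 and 6.6 of \cite{lasserrebook} together with Proposition 4.6 of \cite{LLR2008b} for the flat-extension/rank-stabilization and maximal-rank argument, and recover the points of $V(J^i)$ as eigenvalues of multiplication matrices via the moment matrix algorithm. Your explicit caveat that the rank identity is attained by maximal-rank (interior-point) feasible solutions rather than literally by every element of $R(\mbox{\ref{p:sdpmoj}})$ is simply a more careful reading of the same argument, which the paper leaves implicit in its preceding discussion.
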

\bigskip

The following example, that appears in \cite{ELS2011}, illustrates the methodology proposed in this paper.

\begin{ex}
\label{example}
Consider problem \eqref{molp} with the data:
$$
C=\begin{pmatrix}1 & 0\\0 & 1 \end{pmatrix}, \quad A=\begin{pmatrix} 2&1\\1&1\\1&2\\-1&0\\0&-1\end{pmatrix}, \quad b=\begin{pmatrix}4\\3\\4\\-5\\-5\end{pmatrix}
$$
Observe that the last two constraints refer to the upper bound constraints $x_1\leq 5$ and $x_2\leq 5$, so they are not considered as rows of the matrix $A$ but as the sets of upper bounds in the polynomial constraints $p_j(x)$ in Theorem \ref{t:MOLP}.

We use \texttt{Gloptipoly 3}~\cite{gloptipoly} to formulate the semidefinite problems of Theorem \ref{t:sdpmo} and \texttt{SEDUMI 1.3}~ \cite{sedumi} as the SDP solver. Since in this problem $m=3$, according to Theorem \ref{t:sdpmo}, three semidefinite problems must be solved for each relaxation order $N^*$. The original primal region has integer extreme points. Thus, $M=1$ and we use their original bounds for the range of values in problems \eqref{p:sdpmoj}. Therefore, we must only adapt the range of $\lambda$. For $i=1, 2, 3$ we get that for $M_1=M_2=M_3=6$ and $N^* =4$, the rank condition of Theorem \ref{t:eigen} is satisfied, i.e. $\rank \Mo_4(x,u,\lambda)=\rank \Mo_{1}(x,\mu, \lambda) =2$, and we extract the following solutions of \eqref{p:sdpmoj} (2 for $i=1$, $2$ for $i=2$ and $2$ for $i=3$):
\begin{center}
\begin{tabular}{|c|ccc|ccc|ccc|}\hline
& \multicolumn{3}{c}{$i=1$} &\multicolumn{3}{|c|}{$i=2$} &\multicolumn{3}{|c|}{$i=3$} \\\hline
&$x$ & $u$ & $\lambda$ &$x$ & $u$ & $\lambda$ &$x$ & $u$ & $\lambda$ \\\hline
Sol. \#1&$(1,2)$ & $(2,0,0)$ & $(4,2)$ & $(1,2)$ & $(0,3,0)$ & $(3,3)$& $(2,1)$ & $(0,0,3)$ & $(3,3)$\\
Sol. \#2&$(0,4)$ & $(2,0,0)$ & $(4,2)$ & $(2,1)$ & $(0,3,0)$ & $(3,3)$ & $(4,0)$ & $(0,0,2)$ & $(2,4)$\\
\hline
\end{tabular}
\end{center}
Note that the number of moments involved in the SDP problems that had to be solved was 1716 (923 after preprocessing/moments substitution). The moment matrix $\Mo_{N^*}(x,u,\lambda)$ has size $210\times 210$, there are $6$ localizing matrices with size $84\times 84$ and $4208$ linear inequalities.

Thus, projecting the set of extracted solutions onto the $x$-coordinates, we get the set of Pareto-optimal extreme solutions of the problem, $X_E=\{(4,0), (1,2), (2,1), (0,4)\}$. These Pareto-optimal solutions and the complete Pareto-optimal set are shown in Fig. \ref{fig:exsol} (black dots and black segments, respectively).
\begin{figure}[h]
\begin{center}
 \includegraphics[scale=0.3]{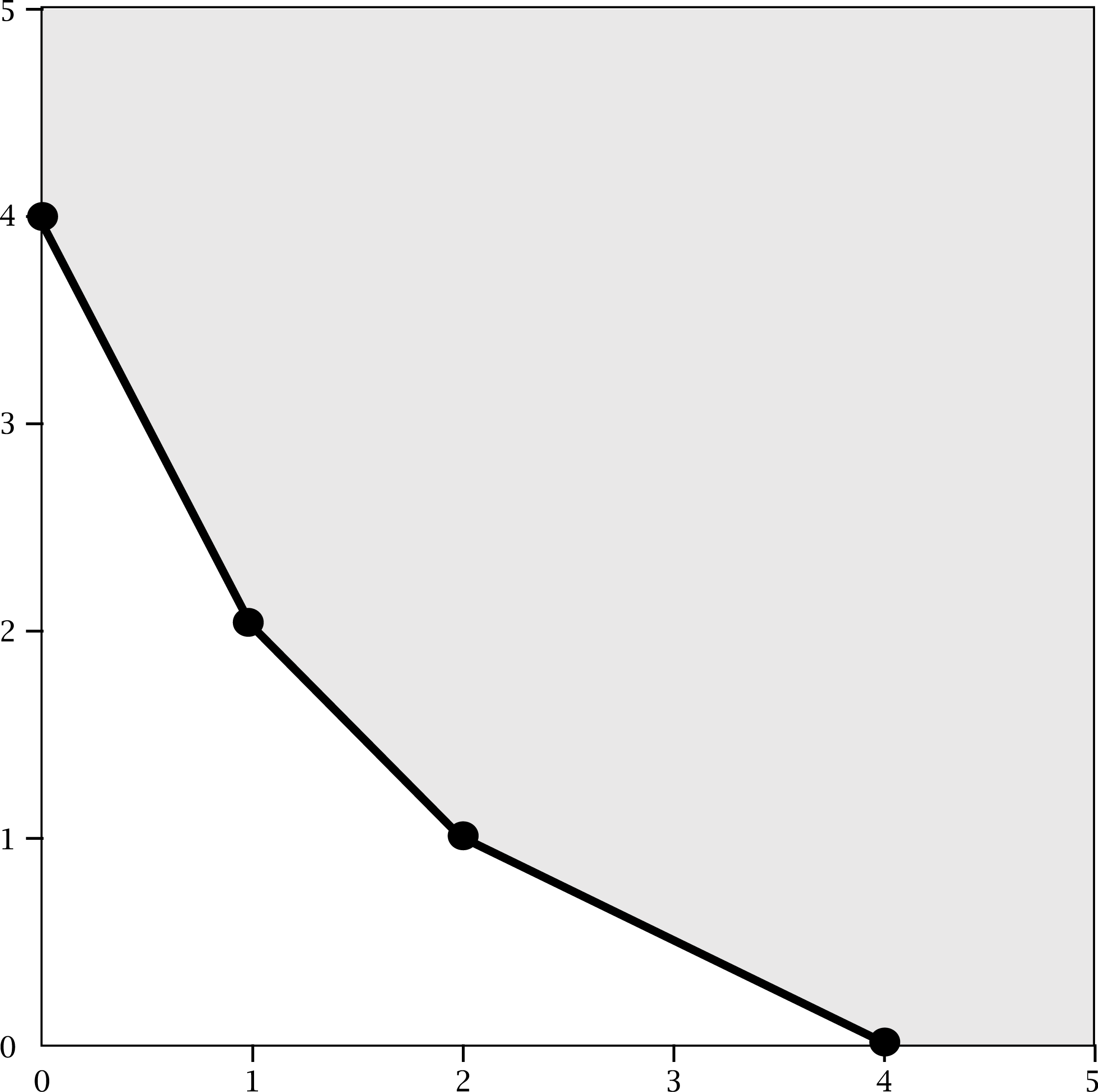}
\caption{Pareto-optimal set of Example \ref{example}.\label{fig:exsol}}
\end{center}
\end{figure}
\end{ex}

\section{Conclusions}

Due to the similarities between standard linear programming and multiobjective linear programming several authors wondered whether it would exist an interior point method valid to find the entire set of Pareto-optimal solutions of MOLP. This paper answers positively this question presenting an interior point based methodology to find that set. Our approach is constructive and we give an explicit sequence of $m$ SDP problems the solutions of which are the entire set of Pareto-optimal extreme points of MOLP. Moreover, we show how all these points can be obtained by applying the so called moment matrix algorithm (see \cite{lasserrebook,Laurent2007}). However, although our construction is explicit, we do not claim that it is computationally competitive with other currently available methods. The main drawback is the size of the SDP problems to be considered which is not polynomial in the input size of MOLP. The interest of our results is mainly theoretical because they show, as expected, the close relationship between scalar and multiobjective linear programming with regards to the solutions techniques. In addition, our results also show the powerfulness of some techniques developed in the field of polynomial optimization (\cite{lasserrebook,lasserre22}) to be applied in apparently different areas as multiobjetive optimization.

\end{document}